\documentclass[a4paper,12pt]{article}

\usepackage{tikz}

\usetikzlibrary{intersections, calc, patterns}

\newcounter{diagram}

\newenvironment{diagram}
   {\stepcounter{diagram}\par\smallskip\noindent\begin{minipage}{\linewidth}\centering}
   {\par Figure~\thediagram\end{minipage}\par\smallskip}

\usepackage{amsmath,amssymb}
\usepackage{amsfonts}
\usepackage{amsthm}
\usepackage{enumitem}
\usepackage{float}
\usepackage{indentfirst}
\usetikzlibrary{arrows}
\usepackage{comment}

\theoremstyle{plain}
\newtheorem{thm}{Theorem}[section]
\newtheorem{dfn}[thm]{Definition}

\newtheorem{cor}[thm]{Corollary}
\newtheorem{lem}[thm]{Lemma}

\theoremstyle{definition}
\newtheorem{ex}[thm]{Example}
\newtheorem{rem}[thm]{Remark}
\newtheorem{ass}[thm]{Assumption}

\usepackage{bm}

\numberwithin{equation}{section}

\title{$A$-hypergeometric Series with Parameters in the Core}
\author{Mao Nagamine
\thanks{This work was supported by JST SPRING, Grant Number JPMJSP2119.}}
\date{}

\begin{document}
\maketitle
\begin{abstract}
In this paper, we discuss the computational approach to the results established by Okuyama and Saito \cite{log19}. Although their results are often difficult to compute, we prove that, when the negative support of a fake exponent $\bm{v}$ with respect to a generic weight $\bm{w}$ is included in a certain set, solutions can be computed using only the reduced Gr\"{o}bner basis, and we can construct all $A$-hypergeometric series with exponent $\bm{v}$ in the direction $\bm{w}$ by Frobenius's method. As an example, we describe the Aomoto-Gel'fand system of type $3 \times 3$ in details.\par
\textit{Key Words and Phrases.} $A$-hypergeometric systems, Frobenius's method, Core.\par
2020 \textit{Mathematics Subject Classification Numbers.} 33C70.
\end{abstract}

\section{Introduction}
$A$-hypergeometric systems are systems of differential equations defined by Gel'fand, Kapranov, and Zelevinsky (GKZ). Frobenius’s method is a technique for constructing logarithmic series solutions by perturbing an exponent of a generic series solution \cite{frobenius}. This method is effective for constructing logarithmic solutions when the parameters are not generic. Such situations arise, for example, in the context of mirror symmetry (for example, see \cite{hosono}, \cite{Jan}). Okuyama and Saito demonstrated in \cite{log19,saito5} that this method can be applied to A-hypergeometric systems. We follow the notation in \cite{log19}. Let $A=[\bm{a}_{1},\ldots,\bm{a}_{n}]=[a_{ij}]$ be a $d \times n$-matrix of rank $d$ with coefficients in $\mathbb{Z}$. Throughout this paper, we assume that all $\bm{a}_{j}$ belong to one hyperplane off the origin in $\mathbb{Q}^d$. Let $\mathbb{N}$ be the set of nonnegative integers, i.e., $\mathbb{N}=\{0,1,2,\ldots\}$. Let $I_{A}$ denote the toric ideal in the polynomial ring $\mathbb{C}[\bm{\partial}]=\mathbb{C}[\partial_{1},\ldots,\partial_{n}]$, i.e.,
\begin{equation*}
    I_{A} = \langle
    \partial^{\bm{u}}-\partial^{\bm{v}} \mid
    A\bm{u}=A\bm{v},\;\bm{u},\bm{v}\in \mathbb{N}^{n}\rangle
    \subset \mathbb{C}[\bm{\partial}].
    \end{equation*}
Here and hereafter we use the multi-index notation; for example, $\partial^{\bm{u}}$ means $\partial^{u_{1}}_{1}\cdots \partial^{u_{n}}_{n}$ for $\bm{u} = (u_{1},\ldots,u_{n})^{T}$. Given a column vector $\bm{\beta}=(\beta_{1},\ldots,\beta_{d})^{T} \in \mathbb{C}^{d}$, let $H_{A}(\bm{\beta})$ denote the left ideal of the Weyl algebra
\begin{equation*}
    D = \mathbb{C}\langle \bm{x},\bm{\partial}\rangle
    = \mathbb{C} \langle x_{1},\ldots,x_{n},\partial_{1},\ldots,\partial_{n}\rangle
\end{equation*}
generated by $I_{A}$ and
\begin{equation*}
    \sum_{j=1}^{n}a_{ij}\theta_{j}-\beta_{i} \quad\quad (i=1,\ldots,d),
\end{equation*}
where $\theta_{j}=x_{j}\partial_{j}$. The quotient $M_{A}(\bm{\beta}) = D / H_{A}(\bm{\beta})$ is called the \textit{A-hypergeometric system with parameter} $\bm{\beta}$, and a formal series annihilated by $H_{A}(\bm{\beta})$ an \textit{A-hypergeometric series with parameter} $\bm{\beta}$.\par
Logarithm-free solutions to $M_{A}(\bm{\beta})$ were constructed by Gel'fand et al.\cite{gelfand} for a generic parameter $\bm{\beta}$. Okuyama and Saito applied Frobenius's method to the $A$-hypergeometric systems \cite{log19}. By this method, we might be able to construct bases of solution space for a nongeneric parameter. Logarithmic coefficients of $A$-hypergeometric series solutions are known to be polynomials in $\log{x^{\bm{b}}}\;(\bm{b}\in L)$ \cite[Proposition 5.2]{saito5}, where
\begin{equation*}
    L:= \mathop{\mathrm{Ker}}\nolimits_{\mathbb{Z}}A=\{ \bm{u}\in \mathbb{Z}^{n} \mid A\bm{u}=\bm{0}\}.
\end{equation*}\par
In Section 2, we recall the main theorem of \cite{log19} for constructing logarithmic series solutions of $A$-hypergeometric systems.\par 
In Section 3, fixing a generic weight $\bm{w}$, we consider the initial ideal $\mathrm{in}_{\bm{w}}I_{A}$ and the triangulation determined by $\bm{w}$. By \cite[Corollary 3.2.3]{log19}, $\bm{v}$ is a fake exponent of $M_{A}(\bm{\beta})$ with respect to $\bm{w}$ if and only if $A\bm{v}=\bm{\beta}$ and there exists a standard pair $(\bm{a},\sigma)$ of $\mathrm{in}_{\bm{w}}I_{A}$ such that $v_{j}=a_{j}$ for all $j\notin \sigma$. Then, we obtain the main theorem (Theorem \ref{mainth}) in this paper. This theorem says that, if the negative support of a fake exponent $\bm{v}$ is included in a certain set, then no coefficients of the perturbed hypergeometric series in Frobenius's method
\begin{equation*}
    F(\bm{x},\bm{s})=\sum_{u\in L'} a_{u}(s)x^{\bm{v}+B\bm{s}+\bm{u}}
\end{equation*}
have poles at $\bm{s}=\bm{0}$, where $B$ is a basis of $L(=\mathrm{Ker}_{\mathbb{Z}}A)$, and $L'$ is a certain subset of $L$, and then $\bm{v}$ is an exponent of $H_{A}(\bm{\beta})$. In this particular case, a basis of the solution space can be obtained solely by computation of the reduced Gröbner basis.
Finally, we applly this theorem to the case when the triangulation $\Delta_{\bm{w}}$ is unimodular, and $\bm{\beta}$ belongs to the core
\begin{equation*}
    \mathrm{Core}(\Delta_{\bm{w}}):=\mathbb{C}\bigcap_{F:\mathrm{facet\; of\; }\Delta_{\bm{w}}} F.
\end{equation*}
In this case, we can construct the series solution space in the direction $\bm{w}$ to $H_{A}(\bm{\beta})$.\par
We construct the bases of solution spaces in a different way from Section 3.6 in \cite[Section 3.5]{SST}. In \cite{SST}, Saito et al. constructed the solution spaces by considering the limit from generic parameters to a non-generic parameter. While the method in \cite[Section 2.6]{SST} solves the problem algorithmically, Frobenius’s method provides a non-algorithmic approach.\par
In Section 4, we describe the Aomoto-Gel'fand system of type $3\times 3$ in details. In addition, we discuss application of the main theorem to K3 surfaces.

\section{Logarithmic Hypergeometric Series}
In this section, we recall some notations and statements in \cite{log19} for construction of logarithmic series solutions of $A$-hypergeometric systems.\par
For $\bm{v}=(v_{1},\ldots,v_{n})^{T}\in \mathbb{C}^{n}$, its support $\mathrm{supp}(\bm{v})$ and its \textit{negative support} $\mathrm{nsupp}(\bm{v})$ are defined as
\begin{align*}
\mathrm{supp}(\bm{v})&:=\{j\in \{1,\ldots , n \}\mid v_{j}\neq 0 \},\\
\mathrm{nsupp}(\bm{v})&:= \{j\in \{1,\ldots , n \}\mid v_{j}\in \mathbb{Z}_{<0} \},
\end{align*}
respectively.\par
For $\bm{v}\in\mathbb{C}^{n}$ and $\bm{u}\in\mathbb{N}^{n}$, set
$$
[\bm{v}]_{\bm{u}}:=\prod_{j=1}^{n}v_{j}(v_{j}-1)\cdots (v_{j}-u_{j}+1).
$$
Note that we can uniquely write $\bm{u}\in\mathbb{Z}^{n}$ as the sum $\bm{u}=\bm{u}_{+}-\bm{u}_{-}$ with $\bm{u}_{+},\bm{u}_{-}\in\mathbb{N}^{n}$ and $\mathrm{supp}(\bm{u}_{+})\cap \mathrm{supp}(\bm{u}_{-})=\emptyset$.\par
Let $B=\{\bm{b}^{(1)},\ldots,\bm{b}^{(h)}\}\subset L$. We write the same symbol $B$ for the $n\times h$ matrix $(\bm{b}^{(1)},\ldots,\bm{b}^{(h)})$.\par
Set
\begin{equation*}
    \mathrm{supp}(B) := \bigcup_{k=1}^{h} \mathrm{supp}(\bm{b}^{(k)})\subset \{1,\ldots,n\},
\end{equation*}
which means the set of all labels for nonzero rows in $B$.\par
Let $\bm{s}=(s_{1},\ldots,s_{h})^{T}$ be indeterminates, and let
\begin{equation*}
    (B\bm{s})_{j}=\sum_{k=1}^{h} b_{j}^{(k)}s_{k} \in \mathbb{C}[\bm{s}]:= \mathbb{C}[s_{1},\ldots,s_{h}]
\end{equation*}
for $j=1,\ldots,n$. Set
\begin{equation*}
    (B\bm{s})_{J}:=\prod_{j\in J}(B\bm{s})_{j}\in \mathbb{C}[\bm{s}]
\end{equation*}
for $J\subset \{1,\ldots,n\}$. Note that $(B\bm{s})_{j}=0$ if $j \notin \mathrm{supp}(B)$, hence we have $(B\bm{s})_{J}=0$ if $J \not\subset \mathrm{supp}(B)$.
Let $\bm{w}$ be a generic weight. Recall that $\bm{v}$ is called a \textit{fake exponent} of $H_{A}(\bm{\beta})$ with respect to $\bm{w}$ if $A\bm{v}=\bm{\beta}$ and $[\bm{v}]_{\bm{u}_{+}}=0$ for all $\bm{u}\in L$ with $\bm{u}_{+}\cdot\bm{w} >\bm{u}_{-}\cdot\bm{w}$, where $\bm{u} \cdot \bm{w} = \sum_{j=1}^{n}u_{j}w_{j}.$\par
From now on, fix a generic weight $\bm{w}$, a fake exponent $\bm{v}$ of $H_{A}(\bm{\beta})$ with respect to $\bm{w}$.\par
We abbreviate nsupp$(\bm{v}+\bm{u})$ to $I_{\bm{u}}$ for $\bm{u}\in L$. In particular, $I_{\bm{0}}=\mathrm{nsupp}(\bm{v})$.\par
For $\bm{u}\in L$, let
\begin{equation*}
    a_{\bm{u}}(\bm{s}):=\frac{[\bm{v}+B\bm{s}]_{\bm{u}_{-}}}{[\bm{v}+B\bm{s}+\bm{u}]_{\bm{u}_{+}}}.
\end{equation*}
Throughout this paper, we put the following assumption.
\begin{ass}\label{ass2}
    $B$ is a basis for $L$ over $\mathbb{Z}$. Hence, $h=n-d$.
\end{ass}

\begin{lem}\textup{{\cite[Lemma 2.4]{log19}}}\label{2.4}
    Let $\bm{u}\in L$. Under Assumption \ref{ass2}, $a_{\bm{u}}(\bm{s})\neq 0$.
\end{lem}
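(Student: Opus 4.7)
The plan is to expand both the numerator and denominator of $a_{\bm{u}}(\bm{s})$ into products of degree-at-most-one polynomials in $\bm{s}$ and show that, under Assumption \ref{ass2}, each factor is a nonzero element of $\mathbb{C}[\bm{s}]$. Since the ring $\mathbb{C}[\bm{s}]$ is a domain, this immediately yields that $a_{\bm{u}}(\bm{s})$ is a well-defined nonzero rational function.

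Concretely, I would begin by writing
\begin{equation*}
    [\bm{v}+B\bm{s}]_{\bm{u}_{-}} = \prod_{j=1}^{n}\prod_{k=0}^{(u_{-})_{j}-1}\bigl(v_{j}+(B\bm{s})_{j}-k\bigr),
\end{equation*}
and similarly for $[\bm{v}+B\bm{s}+\bm{u}]_{\bm{u}_{+}}$, so that the only indices $j$ contributing a nontrivial factor to the numerator (resp.\ denominator) are those with $(u_{-})_{j}>0$ (resp.\ $(u_{+})_{j}>0$). The key observation is that a factor of the form $c+(B\bm{s})_{j}$ with $c\in\mathbb{C}$ vanishes as an element of $\mathbb{C}[\bm{s}]$ only when $(B\bm{s})_{j}$ is the zero polynomial and $c=0$; and $(B\bm{s})_{j}$ is the zero polynomial precisely when the $j$-th row of $B$ vanishes, i.e.\ when $j\notin\mathrm{supp}(B)$.

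The decisive use of Assumption \ref{ass2} is then the following: since $B$ is a basis of $L_{\mathbb{C}}$, every $\bm{u}\in L$ is a $\mathbb{C}$-linear combination of the columns of $B$, so $\mathrm{supp}(\bm{u})\subset \mathrm{supp}(B)$. Any index $j$ appearing nontrivially in the numerator satisfies $(u_{-})_{j}>0$, hence $u_{j}<0$, so $j\in \mathrm{supp}(\bm{u})\subset\mathrm{supp}(B)$; the same reasoning applies to the denominator via $(u_{+})_{j}>0$. Consequently $(B\bm{s})_{j}$ is a genuinely nonzero linear form, and each factor $v_{j}+(B\bm{s})_{j}-k$ (respectively $v_{j}+u_{j}+(B\bm{s})_{j}-k$) is a nonzero polynomial in $\bm{s}$.

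The main obstacle is simply to identify the role of Assumption \ref{ass2}; once one sees that the basis condition is exactly what is needed to force $\mathrm{supp}(\bm{u})\subset \mathrm{supp}(B)$, the rest is bookkeeping over the product representation. No genuine analytic or combinatorial difficulty appears, and the conclusion $a_{\bm{u}}(\bm{s})\neq 0$ follows at once from the fact that $\mathbb{C}[\bm{s}]$ is an integral domain.
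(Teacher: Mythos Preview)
Your argument is correct. Note, however, that the present paper does not supply its own proof of this lemma: it is quoted from \cite[Lemma 2.4]{log19} without proof, so there is no in-paper argument to compare against. Your reduction---using Assumption~\ref{ass2} to get $\mathrm{supp}(\bm{u})\subset\mathrm{supp}(B)$, and then observing that every affine factor $c+(B\bm{s})_{j}$ with $j\in\mathrm{supp}(B)$ is a nonzero element of the integral domain $\mathbb{C}[\bm{s}]$---is the natural and expected proof.
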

We recall some definitions of concepts related to the fake exponent $\bm{v}$.\par
Let
\begin{equation*}
    \mathcal{G}:=\{\partial^{\bm{g}_{+}^{(i)}}-\partial^{\bm{g}_{-}^{(i)}}\mid i=1,\ldots,m\}
\end{equation*}
be the reduced Gr\"{o}bner basis of $I_{A}$ with respect to $\bm{w}$ with $\partial^{\bm{g}_{+}^{(i)}} \in \mathrm{in}_{\bm{w}}I_{A}$ for all $i$. Set
\begin{equation*}
    C(\bm{w}):=\sum_{i=1}^{m}\mathbb{N}\bm{g}^{(i)}.
\end{equation*}
A collection $\mathrm{NS}_{\bm{w}}(\bm{v})$ of negative supports $I_{\bm{u}}\; (\bm{u}\in L)$ is defined by
\begin{equation*}
    \mathrm{NS}_{\bm{w}}(\bm{v}):=\{ I_{\bm{u}} \mid \bm{u}\in L.\; \mathrm{If} \; I_{\bm{u}}=I_{\bm{u'}} \; \mathrm{for} \; \bm{u'} \in L , \; \mathrm{then} \; \bm{u'}\in C(\bm{w})\}.
\end{equation*}
In addition, define
\begin{equation*}
    \mathrm{NS}_{\bm{w}}(\bm{v})^{c}:=\{I_{\bm{u}} \mid \bm{u}\in L\} \setminus \mathrm{NS}_{\bm{w}}(\bm{v}).
\end{equation*}\par
Consider the subset $L'$ of $L$ defined by
\begin{equation*}
    L':=\{ \bm{u}\in L \mid I_{\bm{u}}\in \mathrm{NS}_{w}(\bm{v})\}.
\end{equation*}
By definition, we see that $L'\subset C(\bm{w})$. \par
Let
\begin{equation*}
    K:=\bigcap_{I\in \mathrm{NS}_{w}(\bm{v})} I,
\end{equation*}
and define the homogeneous ideal $P$ of $\mathbb{C}[\bm{s}]$ as
\begin{equation*}
    P:=\Big\langle (B\bm{s})^{I\cup J \setminus K} \mid I\in \mathrm{NS}_{w}(\bm{v}), J\in \mathrm{NS}_{w}(\bm{v})^{c}\Big\rangle.
\end{equation*}
In addition, we define the orthogonal complement $P^{\perp}$ for $P\subset \mathbb{C}[\bm{s}]$ as
\begin{eqnarray*}
    P^{\perp}&:=&\{q(\partial_{\bm{s}})\in \mathbb{C}[\partial_{\bm{s}}] \mid (q(\partial_{\bm{s}})\bullet h(\bm{s}))_{|\bm{s}=0} =0 \; \mathrm{for \; all}\; h(\bm{s})\in P\}\\
    &=&\{q(\partial_{\bm{s}})\in \mathbb{C}[\partial_{\bm{s}}] \mid q(\partial_{\bm{s}})\bullet P \subset \langle s_{1},\ldots, s_{h}\rangle \},
\end{eqnarray*}
where $\mathbb{C}[\partial_{\bm{s}}]:=\mathbb{C}[\partial_{s_{1}},\ldots,\partial_{s_{h}}]$. Since $P$ and $\langle s_{1},\ldots, s_{h} \rangle$ are both homogeneous, $P^{\perp}$ is homogeneous with respect to the usual total ordering.

We define
\begin{equation*}
    m(\bm{s}):=(B{\bm{s}})^{I_{\bm{0}}\setminus K},
\end{equation*}
\begin{equation*}
    F(\bm{x},\bm{s}):=\sum_{\bm{u}\in L'}a_{\bm{u}}(\bm{s})x^{\bm{v}+B\bm{s}+\bm{u}},
\end{equation*}
and
\begin{equation*}
    \tilde F(\bm{x},\bm{s}):=m(\bm{s})F(\bm{x},\bm{s}).
\end{equation*}

Next, we define $G^{(i)}$ for $i=1,\ldots,m$ and $P_{B}$ as
\begin{equation*}
    G^{(i)}:=I_{-\bm{g}^{(i)}}\setminus I_{\bm{0}},
\end{equation*}
\begin{equation*}
    P_{B}:=\langle (B\bm{s})^{G^{(i)}}\mid i=1,\ldots m\rangle.
\end{equation*}

In \cite{log19}, some relations between $P$ and $P_{B}$ are shown. For instance, \cite[Proposition 3.4]{log19}:
    \begin{equation*}
        m(\bm{s}) \cdot P_{B} \subset P \subset P_{B}.
    \end{equation*}
    In particular, if $K=I_{\bm{0}}$, then $P=P_{B}$.
    
For $U(\partial_{\bm{z}})\in\mathbb{C}[\partial_{\bm{z}}]$ and $q(\partial_{\bm{s}})\in\mathbb{C}[\partial_{\bm{s}}]$, a $\mathbb{C}$-linear operation $U(\partial_{z_{1}},\ldots,\partial_{z_{h}})\star q(\partial_{\bm{s}})$ is defined by
\begin{equation*}
U(\partial_{z_{1}},\ldots,\partial_{z_{h}})\star q(\partial_{\bm{s}}):=(U(\partial_{\bm{z}})\bullet q(\bm{z}))|_{\bm{z}=\partial_{\bm{s}}}\in\mathbb{C}[\partial_{\bm{s}}].
\end{equation*}
This is the star operation in \cite{log19}. For the star operator, we have
    \begin{equation*}
    m(\bm{s})\star P^\perp \subset P_{B}^{\perp} \subset P^{\perp}.    
    \end{equation*}
 In particular, if $K=I_{\bm{0}}$, then $P^{\perp}=P_{B}^{\perp}$.

\begin{thm}\textup{{\cite[Theorem 4.4]{log19}}}\label{4.4}
    Assume that $B$ is a basis of $L_{\mathbb{C}}$, and that $P=m(\bm{s})\cdot P_{B}$. Then $\bm{v}$ is an exponent, and the set
    \begin{equation*}
    \{ q(\partial_{\bm{s}})\bullet \tilde{F}(\bm{x},\bm{s})_{|\bm{s}=\bm{0}} \mid q(\partial_{\bm{s}})\in P^{\perp}\}
    \end{equation*}
spans the space of series solutions to $M_{A}(\bm{\beta})$ with exponent $\bm{v}$ in the direction $\bm{w}$.
\end{thm}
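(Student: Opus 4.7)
The argument splits into three parts: (i) every element of the proposed set is annihilated by $H_A(\bm{\beta})$; (ii) $\bm{v}$ is actually an exponent, not merely a fake one; (iii) the set exhausts the space of series solutions with exponent $\bm{v}$ in direction $\bm{w}$.

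For (i), I would handle the Euler part and the toric part separately. Since $AB=0$ and $A\bm{u}=\bm{0}$ for $\bm{u}\in L$, every exponent $\bm{v}+B\bm{s}+\bm{u}$ appearing in $F(\bm{x},\bm{s})$ satisfies $A(\bm{v}+B\bm{s}+\bm{u})=\bm{\beta}$, so the Euler operators annihilate $F$, and hence $\tilde F$, identically in $\bm{s}$. Applying $q(\partial_{\bm{s}})$ and restricting to $\bm{s}=\bm{0}$ preserves this. The toric part reduces to checking the Gr\"obner generators $\partial^{\bm{g}_+^{(i)}}-\partial^{\bm{g}_-^{(i)}}$. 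A direct term-by-term computation, using the formula $a_{\bm{u}}(\bm{s})=[\bm{v}+B\bm{s}]_{\bm{u}_-}/[\bm{v}+B\bm{s}+\bm{u}]_{\bm{u}_+}$ and the telescoping identity $a_{\bm{u}-\bm{g}^{(i)}}(\bm{s})[\bm{v}+B\bm{s}+\bm{u}-\bm{g}^{(i)}_-]_{\bm{g}^{(i)}_+}=a_{\bm{u}}(\bm{s})[\bm{v}+B\bm{s}+\bm{u}-\bm{g}^{(i)}_+]_{\bm{g}^{(i)}_-}$ when both $\bm{u}$ and $\bm{u}-\bm{g}^{(i)}$ belong to $L'$, shows that the coefficients cancel pairwise except on boundary indices where the shift moves us out of $L'$ (or equivalently, where a new negative coordinate appears in $I_{\bm{u}-\bm{g}^{(i)}}\setminus I_{\bm{u}}$). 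By construction of $G^{(i)}=I_{-\bm{g}^{(i)}}\setminus I_{\bm{0}}$, each residual monomial coefficient factors through $(B\bm{s})^{G^{(i)}}$, so the total error lies coefficient-wise in $P_B$. Multiplying by $m(\bm{s})$ promotes the error into $m(\bm{s})\cdot P_B = P$ by hypothesis, and the defining property of $P^{\perp}$ yields vanishing at $\bm{s}=\bm{0}$ after applying $q(\partial_{\bm{s}})$.

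For (ii) and (iii), I would expand $x^{B\bm{s}}=\exp((B\bm{s})\cdot\log\bm{x})$ as a formal series in $\bm{s}$ with coefficients polynomial in $\log\bm{x}$. Then $q(\partial_{\bm{s}})\bullet\tilde F|_{\bm{s}=\bm{0}}$ becomes $\sum_{\bm{u}\in L'}p_{\bm{u},q}(\log\bm{x})\,x^{\bm{v}+\bm{u}}$ with $p_{\bm{u},q}$ polynomial in $\log x_j$ ($j\in\mathrm{supp}(B)$). The $\bm{u}=\bm{0}$ leading coefficient equals $q(\partial_{\bm{s}})\bullet\bigl(m(\bm{s})\,x^{B\bm{s}}\bigr)|_{\bm{s}=\bm{0}}$, which is a nonzero polynomial in $\log\bm{x}$ for any $q$ whose graded components are chosen in a standard monomial basis of $\mathbb{C}[\partial_{\bm{s}}]/\mathrm{Ann}(m)$ projected into $P^{\perp}$; in particular, for $q=1$ when $K=I_{\bm{0}}$ and, in general, for $q$ of degree $|I_{\bm{0}}\setminus K|$. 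This proves (ii) and gives linear independence (leading logarithmic monomials are distinct). For spanning, I would invoke a dimension match: classical results on the rank of $M_A(\bm{\beta})$ at a fake exponent (cf.\ \cite{SST} and \cite[Corollary 3.2.3]{log19}) identify the multiplicity of $\bm{v}$ with a combinatorial quantity on the standard pair, and a direct calculation with the homogeneous ideal $P$ shows $\dim_{\mathbb{C}}P^{\perp}$ agrees with this multiplicity.

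The main obstacle is the bookkeeping in step (i): verifying that after pairing terms, the residual from $(\partial^{\bm{g}_+^{(i)}}-\partial^{\bm{g}_-^{(i)}})\bullet F$ factors through $(B\bm{s})^{G^{(i)}}$ requires a careful case split on whether $\bm{u},\bm{u}-\bm{g}^{(i)}\in L'$, on which coordinates of $\bm{v}+B\bm{s}+\bm{u}$ are negative integers before and after the shift, and on which falling-factorial factors genuinely survive. Steps (ii) and (iii) are then comparatively routine once the ideal-theoretic reformulation is in place.
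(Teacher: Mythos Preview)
The paper does not give a proof of this statement: Theorem~\ref{4.4} is quoted verbatim from \cite[Theorem~4.4]{log19} and used as a black box. There is therefore nothing in the present paper to compare your proposal against.

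That said, a few remarks on your sketch as a standalone argument. Part~(i) is on the right track, but the assertion that the boundary residual from $(\partial^{\bm{g}^{(i)}_+}-\partial^{\bm{g}^{(i)}_-})\bullet F$ factors through $(B\bm{s})^{G^{(i)}}$ with $G^{(i)}=I_{-\bm{g}^{(i)}}\setminus I_{\bm{0}}$ is not quite the right bookkeeping: the boundary terms occur at arbitrary $\bm{u}\in L'$ with $\bm{u}-\bm{g}^{(i)}\notin L'$, and the falling-factorial factors that survive are governed by $I_{\bm{u}}$ and $I_{\bm{u}-\bm{g}^{(i)}}$, not directly by $I_{\bm{0}}$ and $I_{-\bm{g}^{(i)}}$. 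One needs an intermediate step relating the negative supports at a general $\bm{u}$ back to the generators of $P$ (this is where the definition of $P$ in terms of all $I\in\mathrm{NS}_{\bm{w}}(\bm{v})$ and $J\in\mathrm{NS}_{\bm{w}}(\bm{v})^c$, not just $I_{\bm{0}}$ and the $G^{(i)}$, becomes relevant). Part~(iii) is the most incomplete: the dimension match you invoke is the substance of the theorem, and ``a direct calculation with the homogeneous ideal $P$'' hides the real work of identifying $\dim_{\mathbb{C}}P^\perp$ with the local multiplicity of the fake-exponent scheme at $\bm{v}$. In \cite{log19} this is done via an explicit isomorphism, not a numerical coincidence; your outline does not indicate how you would establish it.
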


\section{Main Theorem}
We review standrad pairs from \cite{saito6}.\par
Let $M$ be a monomial ideal in $\mathbb{C}[\bm{\partial}]$. A \textit{standard pair} of $M$ is a pair $(\bm{a},\sigma)$, where $\bm{a}\in\mathbb{N}^{n}$ and $\sigma\subset \{1,\ldots,n\}$ subject to the following three conditions:
    \begin{enumerate}
        \item $a_{i}=0$ for all $i\in\sigma$;
        \item for all choices of integers $b_{j}\geq 0$, the monomial $\partial^{\bm{a}}\cdot \prod_{j\in\sigma}\partial_{j}^{b_{j}}$ is not in $M$;
        \item For any $l\notin \sigma$, there exists $b_{j}\geq 0$ such that $\partial^{\bm{a}}\cdot \partial_{l}^{b_{l}}\cdot \prod_{j\in\sigma}\partial_{j}^{b_{j}}$ lies in $M$.
    \end{enumerate}\par
    Let $S(M)$ denote the set of standard pairs of $M$. Then, by \cite[Corollary 3.2.3]{SST}, $\bm{v}$ is a fake exponent of $M_{A}(\bm{\beta})$ with respect to $\bm{w}$ if and only if $A\bm{v}=\bm{\beta}$ and there exists a standard pair $(\bm{a},\sigma)\in S(\mathrm{in}_{\bm{w}}I_{A})$ such that $v_{j}=a_{j}$ for all $j\notin\sigma$.
\newline

Put
\begin{align*}
\Delta_{\bm{w}}:&=\{\sigma\mid (\bm{a},\sigma)\in S(\mathrm{in}_{\bm{w}}I_{A})\},\\
    C_{\bm{w}}:&=\cap_{\sigma\in\Delta_{\bm{w}}}\sigma\quad (\subset\{1,\ldots,n\}).
\end{align*}

\begin{thm}\label{mainth}
    If $I_{\bm{0}}=\mathrm{nsupp}(\bm{v})\subset C_{\bm{w}}$, then $I_{\bm{0}}=K$. 
\end{thm}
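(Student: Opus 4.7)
My plan is to reduce the statement to showing $I_{\bm{0}} \subseteq I_{\bm{u}}$ for every $\bm{u} \in L'$. The reverse inclusion $K \subseteq I_{\bm{0}}$ requires verifying $I_{\bm{0}} \in \mathrm{NS}_{\bm{w}}(\bm{v})$, i.e.\ that any $\bm{u}' \in L$ with $I_{\bm{u}'} = I_{\bm{0}}$ lies in $C(\bm{w})$; this part I expect to follow by unpacking the constraints $(\bm{u}'_{-})_j \leq a_{\sigma_0,j}$ for $j \notin \sigma_0$ (where $(\bm{a}_{\sigma_0}, \sigma_0)$ is a standard pair associated with $\bm{v}$) together with the hypothesis $I_{\bm{0}} \subseteq C$. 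The substance of the theorem lies in the forward inclusion $I_{\bm{0}} \subseteq K$, where the hypothesis is essential.

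The engine of the proof is a \emph{Key Lemma}: for every element $\bm{g}^{(i)} = \bm{g}_{+}^{(i)} - \bm{g}_{-}^{(i)}$ of the reduced Gr\"obner basis $\mathcal{G}$ and every $j \in C$, one has $(\bm{g}_{+}^{(i)})_j = 0$, hence $(\bm{g}^{(i)})_j = -(\bm{g}_{-}^{(i)})_j \leq 0$. I would prove this by contradiction: if $(\bm{g}_{+}^{(i)})_j \geq 1$, then by minimality of $\partial^{\bm{g}_{+}^{(i)}}$ in the reduced basis the monomial $\partial^{\bm{g}_{+}^{(i)}}/\partial_j$ is standard, so it can be written as $\partial^{\bm{a}} \prod_{k \in \sigma} \partial_k^{b_k}$ for some $(\bm{a}, \sigma) \in S(\mathrm{in}_{\bm{w}} I_A)$. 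Since $j \in C \subseteq \sigma$, multiplying by $\partial_j$ merely increments $b_j$ and keeps us inside the same standard pair, so $\partial^{\bm{g}_{+}^{(i)}} = \partial_j \cdot \partial^{\bm{g}_{+}^{(i)}}/\partial_j$ is again standard; this contradicts $\partial^{\bm{g}_{+}^{(i)}} \in \mathrm{in}_{\bm{w}} I_A$.

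From the Key Lemma, any $\bm{u} \in C(\bm{w}) = \sum_i \mathbb{N}\,\bm{g}^{(i)}$ satisfies $u_j \leq 0$ for every $j \in C$. In particular, for $\bm{u} \in L' \subseteq C(\bm{w})$ and $j \in I_{\bm{0}} \subseteq C$ we have $v_j \in \mathbb{Z}_{<0}$ and $u_j \in \mathbb{Z}_{\leq 0}$, so $v_j + u_j$ is an integer with $v_j + u_j \leq v_j < 0$, i.e.\ $j \in I_{\bm{u}}$. Intersecting over $\bm{u} \in L'$ yields $I_{\bm{0}} \subseteq K$.

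The main obstacle is the Key Lemma. Once one has internalised that $C \subseteq \sigma$ for every $\sigma$ appearing in a standard pair (equivalently, that $\partial_j$ with $j \in C$ is a non-zerodivisor on $\mathbb{C}[\bm{\partial}]/\mathrm{in}_{\bm{w}} I_A$), the contradiction is short, and everything downstream is a routine sign comparison.
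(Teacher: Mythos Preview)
Your proof is correct and follows essentially the same route as the paper's: the paper derives your Key Lemma from the primary decomposition $\mathrm{in}_{\bm{w}} I_A = \bigcap_{(\bm{a},\sigma)} \langle \partial_i^{a_i+1} \mid i \notin \sigma \rangle$ (so $\partial_j$ never appears in a minimal generator when $j \in C$, hence $g_j^{(i)} \leq 0$), and then reaches the same sign contradiction with $v_j < 0$. The reverse inclusion $K \subset I_{\bm{0}}$ is asserted in the paper simply ``by definition'' (relying on $I_{\bm{0}} \in \mathrm{NS}_{\bm{w}}(\bm{v})$, a general fact for fake exponents from \cite{log19}), so your extra justification there, while fine, is not strictly required.
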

\begin{proof}
By definition, $K\subset I_{\bm{0}}$. We prove the statement by contradiction. Suppose $j\in I_{\bm{0}}\setminus K$. Then $j\in C_{\bm{w}}$ by the assumption. From \cite[(3.13)]{SST},
    $$
    \mathrm{in}_{\bm{w}}I_{A}=\bigcap_{(\bm{a},\sigma)\in S(\mathrm{in}_{\bm{w}}I_{A})}\langle \partial_{i}^{a_{i}+1}\mid i\notin \sigma\rangle.
    $$
    Since $j\in C_{\bm{w}}$, we see that $j\in\sigma$ for all $(\bm{a},\sigma)\in S(\mathrm{in}_{\bm{w}}I_{A})$. Therefore in the system of minimal generators of $\mathrm{in}_{\bm{w}}I_{A}$, $\partial_j$ does not appear.
    Let $\mathcal{G}=\{\partial^{\bm{g}_{+}^{(i)}}-\partial^{\bm{g}_{-}^{(i)}}\mid i=1,\ldots,m\}$ with $\bm{w}\cdot \bm{g}_{+}^{(i)}>\bm{w}\cdot \bm{g}_{-}^{(i)}$ be the reduced Gr\"{o}bner basis of $I_{A}$ with respect to $\bm{w}$. By definition, $\{\partial^{\bm{g}_{+}^{(i)}}\mid i=1,\ldots, m\}$ equals the system of minimal generators of $\mathrm{in}_{\bm{w}}I_{A}$. Hence all $\bm{g}^{(i)}$ satisfy $g_{j}^{(i)}\leq0$. Now since $j\notin K$, there exists a vector $\sum_{i=1}^{m}x_{i}\bm{g}^{(i)}$ such that $x_{1}g_{j}^{(1)}+\cdots +x_{m}g_{j}^{(m)}+v_{j}\geq 0$, where $x_{1},\ldots,x_{k}\in \mathbb{N}$. However it is impossible, since we know $v_{j}<0$ from $j\in I_{\bm{0}}=\mathrm{nsupp}({\bm{v}})$. Thus we obtain a contradiction. 
\end{proof}
\begin{rem}
If Theorem \ref{mainth} is applicable, then $I_{0}=K$ holds, and a basis of the solutions space corresponding to $\bm{v}$ can be obtained simply by computing $P_{B}$.
\end{rem}

We apply Theorem \ref{mainth} to the following examples: Example \ref{3.2} and Example \ref{3.3}. In Example \ref{3.2}, we can obtain a basis for the space of series solutions by $P_{B}$. In example \ref{3.3}, there exist both a basis for the solution space obtained by $P_{B}$, and bases that can be obtained without any computation.
\begin{ex}\textup{\cite[Example 4.2.7]{SST}}\label{3.2}
    Let 
    $$
    A=
    \begin{bmatrix}
        1 & 1 & 1 & 1 & 1 & 1\\
        0 & 1 & 1 & 0 & -1 & -1\\
        -1 & -1 & 0 & 1 & 1 & 0
    \end{bmatrix}.
    $$
\begin{diagram}
\begin{center}
\begin{tikzpicture}
\draw[very thin,color=gray] (-2.5,-2.5) grid (2.5,2.5);
\draw[color=blue, ->] (-3.2,0) -- (3.2,0) node[above] {};
\draw[color=blue, ->] (0,-3.2) -- (0,3.2) node[right] {};
\draw[color=red] (0, -1) -- (1, -1);
\draw[color=red] (1, -1) -- (1, 0);
\draw[color=red] (0, 1) -- (1, 0);
\draw[color=red] (-1, 0) -- (0, -1);
\draw[color=red] (0, 1) -- (-1, 1);
\draw[color=red] (-1, 1) -- (-1, 0);

\coordinate (A) at (0,-1) node at (A) [below right] {$\bm{a}_{1}$}; 
\fill (A) circle (2pt); 
\coordinate (B) at (1,-1) node at (B) [above right] {$\bm{a}_{2}$}; 
\fill (B) circle (2pt); 

\node (C) at (3.5, 0) {$y$};
\node (D) at (0, 3.5) {$z$};
\coordinate (E) at (1,0) node at (E) [above right] {$\bm{a}_{3}$}; 
\fill (E) circle (2pt); 
\coordinate (F) at (0,1) node at (F) [above right] {$\bm{a}_{4}$}; 
\fill (F) circle (2pt); 
\coordinate (G) at (-1,1) node at (G) [below left] {$\bm{a}_{5}$}; 
\fill (G) circle (2pt); 
\coordinate (H) at (-1,0) node at (H) [below left] {$\bm{a}_{6}$}; 
\fill (H) circle (2pt); 
\node (I) at (3.5, 3.5) {$(x=1)$};
\end{tikzpicture}
\end{center}
\end{diagram}
    Let $\bm{w}=(2,3,8,1,13,5)$. Then
    \begin{align*}
        I_{A}=\langle &\partial_{2}\partial_{5}-\partial_{1}\partial_{4},\;\partial_{3}\partial_{6}-\partial_{1}\partial_{4},\;\partial_{3}\partial_{5}^{2}-\partial_{4}^{2}\partial_{6},\;\partial_{3}^{2}\partial_{5}-\partial_{2}\partial_{4}^{2},\; \partial_{1}\partial_{5}^{2}-\partial_{4}\partial_{6}^{2},\\
        &\partial_{1}\partial_{3}\partial_{5}-\partial_{2}\partial_{4}\partial_{6},\;\partial_{1}\partial_{3}^{2}-\partial_{2}^{2}\partial_{4},\;\partial_{1}^{2}\partial_{5}-\partial_{2}\partial_{6}^{2},\;\partial_{2}^{2}\partial_{6}^{2}-\partial_{1}^{3}\partial_{4},\;\partial_{1}^{2}\partial_{3}-\partial_{2}^{2}\partial_{6} \rangle.
    \end{align*}
and
$$
\mathrm{in}_{\bm{w}} I_{A}
=
\langle
\partial_2\partial_5,\; \partial_3\partial_6,\;
\partial_3\partial_5^2,\;
\partial_3^2\partial_5,\;
\partial_1\partial_5^2,\;
\partial_1\partial_3\partial_5,\;
\partial_1\partial_3^2,\;
\partial_1^2\partial_5,\;
\partial_2^2\partial_6^2,\;
\partial_1^2\partial_3\;
\rangle.
$$
The standard pairs of $\mathrm{in}_{\bm{w}} I_{A}$ of the top dimension are
$$
\begin{array}{cccc}
(0, 0, 0, \ast, \ast, \ast),
&
(\ast,0,0,\ast,0,\ast),
&
(\ast,\ast,0,\ast,0,0),
&
(0,\ast,\ast,\ast,0,0),\\
&
(\ast,1,0,\ast,0,\ast),
&
(\ast,\ast,0,\ast,0,1).
\end{array}
$$
The embedded standard pairs are
$$
(1,\ast,1,\ast,0,0),\quad
(1,0,0,\ast,1,\ast),\quad
(0,0,1,\ast,1,0),
$$
and $C_{\bm{w}}=\{4\}$.\par
Let $\bm{\beta}=c\bm{a}_{4}$. Then there exist 2 fake exponents $\bm{v}_{1}=(0,0,0,c,0,0)$ and $\bm{v}_{2}=(-\frac{3}{2},1,0,c-\frac{1}{2},0,1)$; $\bm{v}_{1}$ corresponds to standard pairs $(0,0,0,\ast,\ast,\ast)$, $(\ast,0,0,\ast,0,\ast)$, $(\ast,\ast,0,\ast,0,0)$ and $(0,\ast,\ast,\ast,0,0)$, and $\bm{v}_{2}$ to $(\ast,1,0,\ast,0,\ast)$ and $(\ast,\ast,0,\ast,0,1)$. Since $\mathrm{nsupp}(\bm{v}_{1}),\:\mathrm{nsupp}(\bm{v}_{2})\subset C_{\bm{w}}=\{4\}$, we have $I_{\bm{0}}\subset K$ at each fake exponent by Theorem \ref{mainth}.

\end{ex}
\begin{ex}\label{3.3}
Let $A$ be the same as Example \ref{3.2}. Let $\bm{w}=(5,3,1,0,0,0)$. Then
\begin{align*}
        I_{A}=\langle &\partial_{2}\partial_{5}-\partial_{3}\partial_{6},\;\partial_{1}\partial_{4}-\partial_{3}\partial_{6},\;\partial_{3}\partial_{5}^{2}-\partial_{4}^{2}\partial_{6},\;\partial_{2}\partial_{4}^{2}-\partial_{3}^{2}\partial_{5},\; \partial_{1}\partial_{5}^{2}-\partial_{4}\partial_{6}^{2},\\
        &\partial_{1}\partial_{3}\partial_{5}-\partial_{2}\partial_{4}\partial_{6},\;\partial_{1}\partial_{3}^{2}-\partial_{2}^{2}\partial_{4},\;\partial_{1}^{2}\partial_{5}-\partial_{2}\partial_{6}^{2},\;\partial_{1}^{2}\partial_{3}-\partial_{2}^{2}\partial_{6} \rangle.
    \end{align*}
and
    $$
    \mathrm{in}_{\bm{w}}I_{A}=\langle  \partial_{2}\partial_{5},\;\partial_{1}\partial_{4},\;\partial_{3}\partial_{5}^{2},\;\partial_{2}\partial_{4}^{2},\; \partial_{1}\partial_{5}^{2},\;\partial_{1}\partial_{3}\partial_{5},\;\partial_{1}\partial_{3}^{2},\;\partial_{1}^{2}\partial_{5},\;\partial_{1}^{2}\partial_{3} \rangle.
    $$
    The standard pairs of $
    \mathrm{in}_{\bm{w}}I_{A}$ of top dimension are
    $$
\begin{array}{cccc}
    (\ast,\ast,0,0,0,\ast),
    &
    (0,0,\ast,\ast,0,\ast),
    &
    (0,\ast,\ast,0,0,\ast),
    &
    (0,0,0,\ast,\ast,\ast),
    \\
    &
    (0,0,\ast,\ast,1,\ast),
    &
    (0,\ast,\ast,1,0,\ast).
    \end{array}
    $$
    The embedded standard pairs are
$$
(1,\ast,1,0,0,\ast),\quad
(1,0,0,0,1,\ast).
$$
    In this case,
    $$
        \Delta =\{ \{1,2,6\},\{2,3,6\},\{3,4,6\},\{4,5,6\},\{2,6\},\{6\}\},
    $$
    and $C_{\bm{w}}=\{6\}$.\par
    Let $\bm{\beta}=(-1,1,0)=-\bm{a}_{6}$. Then there exist 3 fake exponents $\bm{v}_{1}=(0,0,0,0,0,-1)$, $\bm{v}_{2}=(0,1,-\frac{3}{2},1,0,-\frac{3}{2})$ and $\bm{v}_{3}=(0,0,\frac{1}{2},-1,1,-\frac{3}{2})$; $\bm{v}_{1}$ corresponds to standard pairs $(\ast,\ast,0,0,0,\ast)$, $(0,\ast,\ast,0,0,\ast)$, $(0,0,\ast,\ast,0,\ast)$ and $(0,0,0,\ast,\ast,\ast)$, $\bm{v}_{2}$ to $(0,\ast,\ast,1,0,\ast)$, and $\bm{v}_{3}$ to $(0,0,\ast,\ast,1,\ast)$. Since $\mathrm{nsupp}(\bm{v}_{1})=\{6\}=C_{\bm{w}}$, $\mathrm{nsupp}(\bm{v}_{2})=\emptyset\subset C_{\bm{w}}$ and $\mathrm{nsupp}(\bm{v}_{4})=\{4\}\not\subset C_{\bm{w}}$, we have $I_{\bm{0}}\subset K$ at fake exponents $\bm{v}_{1}$ and  $\bm{v}_{2}$ by Theorem \ref{mainth}. 
\end{ex}
We generalize Example \ref{3.2} and \ref{3.3}.
\begin{ex}
In Example \ref{3.2}, \ref{3.3}, we considered a hexagon, but now we consider an $n$-gon. Let $A=[\bm{a}_{1},\ldots,\bm{a}_{n}]$, where $\bm{a}_{i}$ is the vartex of an $n$-gon, and let $\bm{w}$ is the order with $w_{1}<w_{n}<w_{n-1}<\cdots<w_{3}<w_{2}$, as illustrated in the Figure 2.
\begin{diagram}
\begin{center}
\begin{tikzpicture}
\draw[color=red] (2,2) -- (1.19534,2.56343);
\draw[color=red,dashed] (1.19534,2.56343) -- (0.24651,2.81766);
\draw[color=red] (2,2) -- (2.56343, 1.19534);
\draw[color=red,dashed] (2.56343, 1.19534) -- (2.81766,0.24651);
\draw[color=red] (-2,-2) -- (-1.19534, -2.56343);
\draw[color=red] (-2,-2) -- (-2.56343,-1.19534);
\draw[color=red,dashed] (-1.19534, -2.56343) -- (-0.24651,-2.81766);
\draw[color=red,dashed] (-2.56343,-1.19534) -- (-2.81766,-0.24651);

\coordinate (A) at (2,2) node at (A) [above right] {$\bm{a}_{1}$}; 
\fill (A) circle (2pt); 
\coordinate (B) at (1.19534,2.56343) node at (B) [above right] {$\bm{a}_{2}$}; 
\fill (B) circle (2pt); 

\coordinate (E) at (2.56343, 1.19534) node at (E) [above right] {$\bm{a}_{3}$}; 
\fill (E) circle (2pt); 
\coordinate (F) at (-1.19534, -2.56343) node at (F) [below] {$\bm{a}_{n-1}$}; 
\fill (F) circle (2pt); 
\coordinate (G) at (-2.56343,-1.19534) node at (G) [below left] {$\bm{a}_{n-2}$}; 
\fill (G) circle (2pt); 
\coordinate (H) at (-2,-2) node at (H) [below left] {$\bm{a}_{n}$}; 
\fill (H) circle (2pt); 

\draw (2,2)--(0.24651,2.81766);
\draw (2,2)--(2.81766,0.24651);
\draw (2,2)--(-0.24651,-2.81766);
\draw (2,2)--(-2.81766,-0.2465);
\draw(2,2)--(-1.19534, -2.56343);
\draw(2,2)--(-2.56343,-1.19534);
\draw(2,2)--(-2,-2);

\end{tikzpicture}
\end{center}
\end{diagram}
In this case, $C_{\bm{w}}=\{1\}$. If $\bm{\beta}=-m\bm{a}_{1},\;m\in\mathbb{Z}_{>0}$, then there exists a fake expoent $v=(-m,0,\ldots,0)$. Hence we obtain $\mathrm{nsupp}(\bm{v})=\{1\}=C_{\bm{w}}$ and we have $I_{\bm{0}}\subset K$ by Theorem \ref{mainth}. 
\end{ex}
Under Assumption \ref{ass2}, we obtain the following corollary from Theorem \ref{4.4} and Theorem \ref{mainth}.
\begin{cor}
If $\mathrm{nsupp}(\bm{v})\subset C_{\bm{w}}$. Then the set
\begin{equation*}
    \{(q(\partial_{\bm{s}}) \bullet F(\bm{x},\bm{s}))_{|\bm{s}=\bm{0}} \mid q(\partial_{\bm{s}})\in P_{B}^{\perp}\}
\end{equation*}
equals the space of series solutions to $M_{A}(\bm{\beta})$ with exponent $\bm{v}$ in the direction $\bm{w}$.
\end{cor}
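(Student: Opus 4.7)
The plan is to feed the hypothesis $\mathrm{nsupp}(\bm{v})\subset C$ through Theorem \ref{mainth} to trigger the degenerate case of Theorem \ref{4.4} in which $m(\bm{s})$ is trivial and $P$ coincides with $P_B$.

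First I would invoke Theorem \ref{mainth}: from $I_{\bm{0}}=\mathrm{nsupp}(\bm{v})\subset C$ we obtain $I_{\bm{0}}=K$. Consequently $I_{\bm{0}}\setminus K=\emptyset$, so by definition
\begin{equation*}
m(\bm{s})=(B\bm{s})^{I_{\bm{0}}\setminus K}=(B\bm{s})^{\emptyset}=1,
\end{equation*}
and therefore $\tilde F(\bm{x},\bm{s})=m(\bm{s})F(\bm{x},\bm{s})=F(\bm{x},\bm{s})$. Next I would apply the $K=I_{\bm{0}}$ case of \cite[Proposition 3.4]{log19} recalled in Section 2 to conclude $P=P_B$, which gives trivially $P=m(\bm{s})\cdot P_B$, the standing hypothesis of Theorem \ref{4.4}. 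The same equality also passes to orthogonal complements: $P^{\perp}=P_B^{\perp}$.

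With these identifications in hand, Theorem \ref{4.4} (whose remaining assumption $B$ is a $\mathbb{C}$-basis of $L_{\mathbb{C}}$ is exactly Assumption \ref{ass2}) asserts that $\bm{v}$ is an exponent and that
\begin{equation*}
\{q(\partial_{\bm{s}})\bullet\tilde F(\bm{x},\bm{s})_{|\bm{s}=\bm{0}}\mid q(\partial_{\bm{s}})\in P^{\perp}\}
\end{equation*}
spans the space of series solutions to $M_A(\bm{\beta})$ with exponent $\bm{v}$ in the direction $\bm{w}$. Substituting $\tilde F=F$ and $P^{\perp}=P_B^{\perp}$, this spanning set becomes precisely the one displayed in the corollary.

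Finally, to upgrade ``spans'' to ``equals'' I would observe that the assignment $q(\partial_{\bm{s}})\mapsto(q(\partial_{\bm{s}})\bullet F(\bm{x},\bm{s}))_{|\bm{s}=\bm{0}}$ is $\mathbb{C}$-linear in $q$, so the image of the subspace $P_B^{\perp}\subset\mathbb{C}[\partial_{\bm{s}}]$ is itself a linear subspace and hence coincides with its own span. I do not foresee a real obstacle here; the only place one must be a touch careful is in checking the bookkeeping that $K=I_{\bm{0}}$ simultaneously trivialises $m(\bm{s})$, collapses $P$ onto $P_B$, and identifies the two orthogonal complements, all of which are recorded just before Theorem \ref{4.4}.
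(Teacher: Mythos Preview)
Your proposal is correct and follows exactly the route the paper intends: the paper does not spell out a proof but merely records that the corollary follows from Theorem~\ref{4.4} and Theorem~\ref{mainth} under Assumption~\ref{ass2}, and your argument is precisely the unpacking of that implication (Theorem~\ref{mainth} gives $K=I_{\bm{0}}$, whence $m(\bm{s})=1$, $\tilde F=F$, $P=P_B$, $P^{\perp}=P_B^{\perp}$, and Theorem~\ref{4.4} applies). Your final remark upgrading ``spans'' to ``equals'' via linearity is a detail the paper leaves implicit but is indeed needed to match the wording of the corollary.
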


\begin{dfn}
    The triangulation $\Delta_{\bm{w}}$ determined by $\bm{w}$ is said to be unimodular if all facets $\sigma$ of $\Delta_{\bm{w}}$ have normalized volume equal to 1. The matrix $A$ is said to be unimodular if every square regular matrix has determinant $+1$ or $-1$. If $A$ is unimodular, then any triangulation is unimodular \cite[p.139]{SST}.
\end{dfn}
\begin{rem}
If $A$ is unimodular, then
$$\{ \mathrm{facets \: of \:} \Delta_{\bm{w}}\}=\{\{\bm{a}_{j}\mid j\in \sigma\}\mid \sigma\in\Delta_{\bm{w}}\}.
$$
\end{rem}
\begin{cor}\label{NT}
    Assume that $\Delta_{\bm{w}}$ is unimodular. If  $\bm{\beta}\in\mathrm{Core}(\Delta_{\bm{w}})$, then $\bm{\beta}$ has a unique fake exponent $\bm{v}$ with respect to $\bm{w}$, $\mathrm{supp}(\bm{v})\subset C_{\bm{w}}$, and at $\bm{v}$ we have $I_{\bm{0}}=K$.
    In particular, if $\bm{\beta} \in \mathrm{Core}(\Delta_{\bm{w}})$, we can construct the space of $A$-hypergeometric series in the direction $\bm{w}$ by Frobenius's method.
\end{cor}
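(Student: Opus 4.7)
My plan is to reduce the corollary to a combination of Theorem~\ref{mainth} and Theorem~\ref{4.4}, with the unimodular hypothesis doing most of the combinatorial work.

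First, I would use unimodularity of $\Delta_{\bm{w}}$ to simplify the structure of standard pairs. Unimodularity implies that $\mathrm{in}_{\bm{w}}I_{A}$ is the squarefree Stanley--Reisner ideal of $\Delta_{\bm{w}}$, so every standard pair of $\mathrm{in}_{\bm{w}}I_{A}$ has the form $(\bm{0},\sigma)$ with $\sigma$ a top-dimensional simplex of $\Delta_{\bm{w}}$. Then $\Delta$ consists exactly of these facets, and the fake-exponent criterion from \cite[Corollary~3.2.3]{SST} simplifies as follows: $\bm{v}$ is a fake exponent of $\bm{\beta}$ with respect to $\bm{w}$ iff $A\bm{v}=\bm{\beta}$ and there is a facet $\sigma\in\Delta$ with $v_{j}=0$ for $j\notin\sigma$. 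Because $A_{\sigma}$ is a $d\times d$ unimodular (hence invertible) matrix, each facet $\sigma$ yields a unique candidate fake exponent $\bm{v}^{\sigma}$ given by $\bm{v}^{\sigma}_{\sigma}=A_{\sigma}^{-1}\bm{\beta}$ and $v^{\sigma}_{j}=0$ for $j\notin\sigma$.

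Next, I would use the hypothesis $\bm{\beta}\in\mathrm{Core}(\Delta_{\bm{w}})$ together with the Remark identifying the facets of $\Delta_{\bm{w}}$ with $\{\{\bm{a}_{j}:j\in\sigma\}:\sigma\in\Delta\}$ to collapse the support of each $\bm{v}^{\sigma}$ onto $C=\bigcap_{\sigma\in\Delta}\sigma$. The heart of the argument is linear-algebraic: for any two facets $\sigma,\sigma'$, comparing the representations $\bm{\beta}=\sum_{j\in\sigma}v^{\sigma}_{j}\bm{a}_{j}=\sum_{j\in\sigma'}v^{\sigma'}_{j}\bm{a}_{j}$ and using the Core hypothesis to force $\bm{\beta}$ into the span of the indices common to both facets, I would deduce $v^{\sigma}_{j}=0$ for $j\in\sigma\setminus\sigma'$. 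Iterating this over all $\sigma'\in\Delta$ forces $\mathrm{supp}(\bm{v}^{\sigma})\subset C$; since $C\subset\sigma$ for every $\sigma$, the coordinates of $\bm{v}^{\sigma}$ on $C$ are uniquely determined by $\bm{\beta}$ independently of $\sigma$. This yields a unique fake exponent $\bm{v}$ with $\mathrm{supp}(\bm{v})\subset C$.

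Finally, the inclusion $\mathrm{nsupp}(\bm{v})\subset\mathrm{supp}(\bm{v})\subset C$ puts us in the hypothesis of Theorem~\ref{mainth}, giving $I_{\bm{0}}=K$; in particular $m(\bm{s})=(B\bm{s})^{I_{\bm{0}}\setminus K}=1$. The relation $m(\bm{s})\cdot P_{B}\subset P\subset P_{B}$ recalled in Section~2 then forces $P=P_{B}=m(\bm{s})\cdot P_{B}$, so the hypothesis of Theorem~\ref{4.4} holds. Applying Theorem~\ref{4.4} then realizes $\bm{v}$ as a genuine exponent and constructs via Frobenius's method a spanning set for the series-solution space of $M_{A}(\bm{\beta})$ in the direction $\bm{w}$, with $\tilde{F}=F$. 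The main obstacle is the second step: one must translate the Core hypothesis, which is an intersection-of-subspaces statement about $\bm{\beta}$, into a pointwise vanishing of the coordinates of each $\bm{v}^{\sigma}$ outside $C$. This requires careful linear algebra using the unimodularity of each $A_{\sigma}$ and a sufficient understanding of how the representations of $\bm{\beta}$ in adjacent facets are compatible along shared sub-simplices of the triangulation.
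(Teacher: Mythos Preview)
Your plan is essentially the paper's own route: use unimodularity to reduce all standard pairs to the form $(\bm{0},\sigma)$, deduce $\mathrm{supp}(\bm{v})\subset C$ from the Core hypothesis, then feed $\mathrm{nsupp}(\bm{v})\subset C$ into Theorem~\ref{mainth} and invoke Theorem~\ref{4.4}. Your steps~1, 3, and~4 match the paper exactly; you are simply more explicit about why $I_{\bm{0}}=K$ forces $m(\bm{s})=1$ and hence $P=m(\bm{s})\cdot P_{B}$, which the paper leaves implicit.

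The one substantive difference is your step~2. The paper does not argue this directly: it writes only that, once all standard pairs are of type $(\bm{0},\ast)$, ``$\bm{v}$ has to satisfy $\mathrm{supp}(\bm{v})\subset C$ as in the proof of Lemma~5.1 in \cite{log19}.'' So the linear-algebraic reduction you outline---comparing $\bm{v}^{\sigma}$ and $\bm{v}^{\sigma'}$ across facets and using the Core hypothesis to kill coordinates outside $C$---is exactly the content being imported from that external lemma. Your honest flag that this is the ``main obstacle'' is well placed: the pairwise comparison you sketch does not by itself force $\bm{\beta}$ into the span of $\{\bm{a}_{j}:j\in\sigma\cap\sigma'\}$ (each facet already spans the whole column space), so one really needs the argument from \cite[Lemma~5.1]{log19}, which proceeds by expressing $\bm{\beta}$ in the basis of a fixed facet and showing that the Core condition forces the non-$C$ coordinates to vanish. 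If you either cite that lemma or reproduce its proof, your argument is complete and coincides with the paper's.
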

\begin{proof}
If $\Delta_w$ is unimodular, then all standard pairs are formed only $0$ and $\ast$. Hence $\bm{v}$ has to satisfy $\mathrm{supp}(\bm{v})\subset C_{\bm{w}}$ as in the proof of Lemma 5.1 in \cite{log19}.
\end{proof}

\section{Aomoto-Gel'fand system of type $3\times 3$}
Let $\{\bm{e}_{1},\ldots,\bm{e}_{M+N}\}$ be the standard basis of $\mathbb{Z}^{M+N}$, and put
$$
\bm{a}_{i,j}:=\bm{e}_{i}+\bm{e}_{j}\qquad (1\leq i \leq M < j \leq M+N),
$$
$$
A=[\bm{a}_{1,M+1},\ldots,\bm{a}_{1,M+N},\bm{a}_{2,M+1},\ldots,\bm{a}_{2,M+N},\ldots,\bm{a}_{d,M+1},\ldots,\bm{a}_{M+N} ].
$$
Then $n=MN,\:d=\mathrm{rank}(A)=M+N-1$, and by \cite[Example 5.1]{sturmfels}, the toric ideal is
$$
I_{A}=\langle \partial_{ij}\partial_{kl}-\partial_{il}\partial_{kj}\mid 1\leq i,k\leq M<j,l\leq M+N \rangle .
$$
According to \cite[p.139]{SST}, the configuration $A$ is the vertex set of a product of two simplices and is unimodular.\par
We consider the Aomoto-Gel'fand of type $3\times 3$ for $\bm{v}$ such that $\mathrm{nsupp}(\bm{v})\subset C_{\bm{w}}$. Then
\begin{equation*}
    A = \begin{bmatrix}
    1 & 1 & 1 & 0 & 0 & 0 & 0 & 0 & 0\\
    0 & 0 & 0 & 1 & 1 & 1 & 0 & 0 & 0\\
    0 & 0 & 0 & 0 & 0 & 0 & 1 & 1 & 1\\
    1 & 0 & 0 & 1 & 0 & 0 & 1 & 0 & 0\\
    0 & 1 & 0 & 0 & 1 & 0 & 0 & 1 & 0\\
    0 & 0 & 1 & 0 & 0 & 1 & 0 & 0 & 1\\
    \end{bmatrix}.
\end{equation*}
A generic weight $\bm{w}$ induces one of the 5 triangulations illustrated by A.\:E.\:Postnikov in \cite[p.250]{black}, up to $S_{3}\times S_{3}$ symmetry. The following is one of the triangulations of the $\Delta^{3}\times\Delta^{3}$ where $\ast$ denotes a vertex of each simplex.\\
Case1 (staircase)
\begin{equation*}
    \begin{bmatrix}
        0 & 0 & \ast \\
        0 & 0 & \ast \\
        \ast & \ast & \ast
    \end{bmatrix}, \;
    \begin{bmatrix}
        0 & 0 & \ast \\
        0 & \ast & \ast \\
        \ast & \ast & 0
    \end{bmatrix}, \;
    \begin{bmatrix}
        0 & \ast & \ast \\
        0 & \ast & 0 \\
        \ast & \ast & 0
    \end{bmatrix}, \;
    \begin{bmatrix}
        0 & 0 & \ast \\
        \ast & \ast & \ast \\
        \ast & 0 & 0
    \end{bmatrix}, \;
    \begin{bmatrix}
        0 & \ast & \ast \\
        \ast & \ast & 0 \\
        \ast & 0 & 0
    \end{bmatrix}, \;
    \begin{bmatrix}
        \ast & \ast & \ast \\
        \ast & 0 & 0 \\
        \ast & 0 & 0
    \end{bmatrix}.
\end{equation*}
Case 2
\begin{equation*}
    \begin{bmatrix}
        0 & 0 & \ast \\
        0 & \ast & 0\\
        \ast & \ast & \ast
    \end{bmatrix}, \;
    \begin{bmatrix}
        0 & \ast & \ast \\
        0 & \ast & 0 \\
        \ast & \ast & 0
    \end{bmatrix}, \;
    \begin{bmatrix}
        0 & 0 & \ast \\
        0 & \ast & \ast \\
        \ast & 0 & \ast
    \end{bmatrix}, \;
    \begin{bmatrix}
        0 & 0 & \ast \\
        \ast & \ast & \ast \\
        \ast & 0 & 0
    \end{bmatrix}, \;
    \begin{bmatrix}
        0 & \ast & \ast \\
        \ast & \ast & 0 \\
        \ast & 0 & 0
    \end{bmatrix}, \;
    \begin{bmatrix}
        \ast & \ast & \ast \\
        \ast & 0 & 0 \\
        \ast & 0 & 0
    \end{bmatrix}.
\end{equation*}
Case 3
\begin{equation*}
    \begin{bmatrix}
        \ast & \ast & \ast \\
        0 & \ast & 0\\
        0 & \ast & 0
    \end{bmatrix}, \;
    \begin{bmatrix}
        \ast & 0 & \ast \\
        0 & \ast & 0 \\
        \ast & \ast & 0
    \end{bmatrix}, \;
    \begin{bmatrix}
        0 & 0 & \ast \\
        0 & \ast & \ast \\
        \ast & \ast & 0
    \end{bmatrix}, \;
    \begin{bmatrix}
        \ast & 0 & \ast \\
        0 & \ast & \ast \\
        \ast & 0 & 0
    \end{bmatrix}, \;
    \begin{bmatrix}
        0 & 0 & \ast \\
        0 & 0 & \ast \\
        \ast & \ast & \ast
    \end{bmatrix}, \;
    \begin{bmatrix}
        \ast & 0 & 0 \\
        \ast & \ast & \ast \\
        \ast & 0 & 0
    \end{bmatrix}.
\end{equation*}
Case 4
\begin{equation*}
    \begin{bmatrix}
        0 & \ast & \ast \\
        0 & \ast & 0\\
        \ast & \ast & 0
    \end{bmatrix}, \;
    \begin{bmatrix}
        0 & 0 & \ast \\
        0 & \ast & \ast \\
        \ast & \ast & 0
    \end{bmatrix}, \;
    \begin{bmatrix}
        0 & 0 & \ast \\
        0 & 0 & \ast \\
        \ast & \ast & \ast
    \end{bmatrix}, \;
    \begin{bmatrix}
        \ast & \ast & \ast \\
        0 & \ast & 0 \\
        \ast & 0 & 0
    \end{bmatrix}, \;
    \begin{bmatrix}
        \ast & 0 & \ast \\
        0 & \ast & \ast \\
        \ast & 0 & 0
    \end{bmatrix}, \;
    \begin{bmatrix}
        \ast & 0 & 0 \\
        \ast & \ast & \ast \\
        \ast & 0 & 0
    \end{bmatrix}.
\end{equation*}
Case 5
\begin{equation*}
    \begin{bmatrix}
        0 & 0 & \ast \\
        0 & \ast & 0\\
        \ast & \ast & \ast
    \end{bmatrix}, \;
    \begin{bmatrix}
        0 & \ast & \ast \\
        0 & \ast & 0 \\
        \ast & \ast & 0
    \end{bmatrix}, \;
    \begin{bmatrix}
        \ast & \ast & \ast \\
        0 & \ast & 0 \\
        \ast & 0 & 0
    \end{bmatrix}, \;
    \begin{bmatrix}
        0 & 0 & \ast \\
        0 & \ast & \ast \\
        \ast & 0 & \ast
    \end{bmatrix}, \;
    \begin{bmatrix}
        0 & 0 & \ast \\
        \ast & \ast & \ast \\
        \ast & 0 & 0
    \end{bmatrix}, \;
    \begin{bmatrix}
        \ast & 0 & \ast \\
        \ast & \ast & 0 \\
        \ast & 0 & 0
    \end{bmatrix}.
\end{equation*}
\begin{align*}
    \mathrm{Case}\:1:\Delta=\{&\{3,6,7,8,9\},\{3,5,6,7,8\},\{2,3,5,7,8\},\{3,4,5,6,7\},\\
    &\{2,3,4,5,7\},\{1,2,3,4,7\}\},\:C_{w}=\{3,7\},\\
    \mathrm{Core}(\Delta_{\bm{w}})=&\{\mathbb{C}\bm{a}_{3}+\mathbb{C}\bm{a}_{7}\}.
    \end{align*}
    \begin{align*}
    \mathrm{Case}\:2:\Delta=\{&\{3,5,7,8,9\},\{2,3,5,7,8\},\{3,5,6,7,9\},\{3,4,5,6,7\},\\
    &\{2,3,4,5,7\},\{1,2,3,4,7\}\},\:C_{w}=\{3,7\},\\
    \mathrm{Core}(\Delta_{\bm{w}})=&\{\mathbb{C}\bm{a}_{3}+\mathbb{C}\bm{a}_{7}\}.
    \end{align*}
    \begin{align*}
    \mathrm{Case}\:3:\Delta=\{&\{1,2,3,5,8,\},\{1,3,5,7,8\},\{3,5,6,7,8\},\{1,3,5,6,7\},\\
    &\{3,6,7,8,9\},\{1,4,5,6,7\}\},\:C_{w}=\emptyset,\\
    \mathrm{Core}(\Delta_{\bm{w}})=&\emptyset.
    \end{align*}
    \begin{align*}
    \mathrm{Case}\:4:\Delta=\{&\{2,3,5,7,8\},\{3,5,6,7,8\},\{3,6,7,8,9\},\{1,2,3,5,7\},\\
    &\{1,3,5,6,7\},\{1,4,5,6,7\}\},\:C_{w}=\{7\},\\
    \mathrm{Core}(\Delta_{\bm{w}})=&\{\mathbb{C}\bm{a}_{7}\}.
    \end{align*}
    \begin{align*}
    \mathrm{Case}\:5:\Delta=\{&\{3,5,7,8,9\},\{2,3,5,7,8\},\{1,2,3,5,7\},\{3,5,6,7,9\},\\
    &\{3,4,5,6,7\},\{1,3,4,5,7\}\},\:C_{w}=\{3,5,7\},\\
    \mathrm{Core}(\Delta_{\bm{w}})=&\{\mathbb{C}\bm{a}_{3}+\mathbb{C}\bm{a}_{5}+\mathbb{C}\bm{a}_{7}\}.
    \end{align*}
\\
Case 1\par
Let $\bm{w}=(1,2,3,5,8,13,21,34,55)$. Then
\begin{equation*}
    \mathrm{in}_{\bm{w}}I_{A} = \langle
    \partial_{1}\partial_{5},\:
    \partial_{1}\partial_{6},\:
    \partial_{2}\partial_{6},\:
    \partial_{1}\partial_{8},\:
    \partial_{4}\partial_{8},\:
    \partial_{1}\partial_{9},\:
    \partial_{2}\partial_{9},\:
    \partial_{4}\partial_{9},\:
    \partial_{5}\partial_{9}
    \rangle.
\end{equation*}
This weight $\bm{w}$ induces a staircase regular triangulation and the standard pairs are
\begin{equation*}
    \begin{bmatrix}
        0 & 0 & \ast \\
        0 & 0 & \ast \\
        \ast & \ast & \ast
    \end{bmatrix}, \;
    \begin{bmatrix}
        0 & 0 & \ast \\
        0 & \ast & \ast \\
        \ast & \ast & 0
    \end{bmatrix}, \;
    \begin{bmatrix}
        0 & \ast & \ast \\
        0 & \ast & 0 \\
        \ast & \ast & 0
    \end{bmatrix}, \;
    \begin{bmatrix}
        0 & 0 & \ast \\
        \ast & \ast & \ast \\
        \ast & 0 & 0
    \end{bmatrix}, \;
    \begin{bmatrix}
        0 & \ast & \ast \\
        \ast & \ast & 0 \\
        \ast & 0 & 0
    \end{bmatrix}, \;
    \begin{bmatrix}
        \ast & \ast & \ast \\
        \ast & 0 & 0 \\
        \ast & 0 & 0
    \end{bmatrix}.
\end{equation*}
In this case, the reduced Gr\"{o}bner basis $\mathcal{G}$ of $I_{A}$ with respect to $\bm{w}$ is
\begin{equation*}
\begin{split}
    \mathcal{G} = \{
    &\underline{\partial_{1}\partial_{5}}-\partial_{2}\partial_{4},
    \underline{\partial_{1}\partial_{8}}-\partial_{2}\partial_{7},
    \underline{\partial_{4}\partial_{8}}-\partial_{5}\partial_{7},\\
    &\underline{\partial_{1}\partial_{6}}-\partial_{3}\partial_{4},
    \underline{\partial_{1}\partial_{9}}-\partial_{3}\partial_{7},
    \underline{\partial_{4}\partial_{9}}-\partial_{6}\partial_{7},\\
    &\underline{\partial_{2}\partial_{6}}-\partial_{3}\partial_{5},
    \underline{\partial_{2}\partial_{9}}-\partial_{3}\partial_{8},
    \underline{\partial_{5}\partial_{9}}-\partial_{6}\partial_{8}
    \},
\end{split}
\end{equation*}
where the underlined terms are leading ones on the weight $\bm{w}$. Put
\begin{eqnarray*}
    g^{(1)} &=& (1,\;-1,\;0,\;-1,\;1,\;0,\;0,\;0,\;0)^{T},\\
    g^{(2)} &=& (1,\;-1,\;0,\;0,\;0,\;0,\;-1,\;1,\;0)^{T},\\
    g^{(3)} &=& (0,\;0,\;0,\;1,\;-1,\;0,\;-1,\;1,\;0)^{T},\\
    g^{(4)} &=& (1,\;0,\;-1,\;-1,\;0,\;1,\;0,\;0,\;0)^{T},\\
    g^{(5)} &=& (1,\;0,\;-1,\;0,\;0,\;0,\;-1,\;0,\;1)^{T},\\
    g^{(6)} &=& (0,\;0,\;0,\;1,\;0,\;-1,\;-1,\;0,\;1)^{T},\\
    g^{(7)} &=& (0,\;1,\;-1,\;0,\;-1,\;1,\;0,\;0,\;0)^{T},\\
    g^{(8)} &=& (0,\;1,\;-1,\;0,\;0,\;0,\;0,\;-1,\;1)^{T},\\
    g^{(9)} &=& (0,\;0,\;0,\;0,\;1,\;-1,\;0,\;-1,\;1)^{T}.
\end{eqnarray*}
Since the following equality holds:
\begin{eqnarray*}
g^{(2)} &=&   g^{(1)} + g^{(3)},\\
        g^{(4)} &=&   g^{(1)} + g^{(7)},\\
        g^{(5)} &=&   g^{(1)} + g^{(3)} + g^{(7)} + g^{(9)},\\
        g^{(6)} &=&  g^{(3)} + g^{(9)},\\
        g^{(8)} &=&  g^{(7)} + g^{(9)},\\
    \end{eqnarray*}
    we have
\begin{eqnarray*}
    C(\bm{w})&=&\sum_{i=1}^{9} \mathbb{N} g^{(i)}\\
    &=&\mathbb{N}g^{(1)} \oplus  \mathbb{N}g^{(3)}
    \oplus  \mathbb{N}g^{(7)} \oplus  \mathbb{N}g^{(9)}.
\end{eqnarray*}
        
Put $B=\{g^{(1)},\:g^{(3)},\:g^{(7)},\:g^{(9)}\}$ and $s=(s_{1},\;s_{2},\;s_{3},\;s_{4})$, then
\begin{equation*}
    Bs = \begin{bmatrix}
        s_{1}\\
        -s_{1}+s_{3}\\
        -s_{3}\\
        -s_{1}+s_{2}\\
        s_{1}-s_{2}-s_{3}+s_{4}\\
        s_{3}-s_{4}\\
        -s_{2}\\
        s_{2}-s_{4}\\
        s_{4}
        \end{bmatrix}.
\end{equation*}
Let $\bm{\beta}=\bm{0}$. Then $\bm{v}=\bm{0}$, and $I_{\bm{0}}=\emptyset$. Hence $G^{(i)}=I_{-\bm{g}^{(i)}}=\mathrm{nsupp}(-\bm{g}^{(i)})$ and we obtain
\begin{align*}
    \mathrm{nsupp}(-g^{(1)})&=\{1,5\},\\
    \mathrm{nsupp}(-g^{(2)})&=\{1,8\},\\
    \mathrm{nsupp}(-g^{(3)})&=\{4,8\},\\
    \mathrm{nsupp}(-g^{(4)})&=\{1,6\},\\
    \mathrm{nsupp}(-g^{(5)})&=\{1,9\},\\
    \mathrm{nsupp}(-g^{(6)})&=\{4,9\},\\
    \mathrm{nsupp}(-g^{(7)})&=\{2,6\},\\
    \mathrm{nsupp}(-g^{(8)})&=\{2,9\},\\
    \mathrm{nsupp}(-g^{(9)})&=\{5,9\}.\\
\end{align*}
Furthermore $P_{B}$ and $P^{\perp}_{B}$ are
    \begin{equation*}
    \begin{split}
    P_{B} &= \langle
    s_{1}(s_{1}-s_{2}-s_{3}+s_{4}), \;
    s_{1}(s_{2}-s_{4}), \;
    (-s_{1}+s_{2})(s_{2}-s_{4}), \;
    s_{1}(s_{3}-s_{4}), \;
    s_{1}s_{4}, \;\\
    &\quad\quad(-s_{1}+s_{2})s_{4}, \;
    (-s_{1}+s_{3})(s_{3}-s_{4}), \;
    (-s_{1}+s_{3})s_{4}, \;
    (s_{1}-s_{2}-s_{3}+s_{4})s_{4} \rangle\\
    &= \langle
    s^2_{1},\;s^2_{2},\;s^2_{3},\;s^2_{4},\; s_{1}s_{2},\; s_{1}s_{3},\; s_{1}s_{4},\; s_{2}s_{4},\; s_{3}s_{4}\rangle,
    \end{split}
    \end{equation*}
    \begin{equation*}
        P^{\perp}_{B}=\langle 1,\; \partial_{s_{1}},\; \partial_{s_{2}},\; \partial_{s_{3}},\; \partial_{s_{4}},\; \partial_{s_{2}}\partial_{s_{3}}
        \rangle.
    \end{equation*}
Then, we see the solution space of this case is spanned by
\begin{equation*}
    \begin{split}
        \{&(F(x,s))_{|s=0},\; \partial_{s_{1}}(F(x,s))_{|s=0},\; \partial_{s_{2}}(F(x,s))_{|s=0},\\
        &\partial_{s_{3}}(F(x,s))_{|s=0},\; \partial_{s_{4}}(F(x,s))_{|s=0},\; \partial_{s_{2}}\partial_{s_{3}}(F(x,s))_{|s=0}\}.
    \end{split}
\end{equation*}
For a general $\bm{\beta}\in\mathrm{Core}(\Delta_{\bm{w}})$, we similarly obtain the solution space.\\
\\
For the other cases, we can do the same as follows. For simplicity, we suppose that $\bm{\beta}=\bm{0}$, and hence $\bm{v}=\bm{0}$.\\
\\
Case 2\par
Let $\bm{w}=(55,34,1,21,2,13,3,5,8)$. Then 
\begin{equation*}
    \mathrm{in}_{\bm{w}}I_{A} = \langle
    \partial_{1}\partial_{5},\:
    \partial_{1}\partial_{6},\:
    \partial_{2}\partial_{6},\:
    \partial_{1}\partial_{8},\:
    \partial_{4}\partial_{8},\:
    \partial_{1}\partial_{9},\:
    \partial_{2}\partial_{9},\:
    \partial_{4}\partial_{9},\:
    \partial_{6}\partial_{8}
    \rangle.
\end{equation*}
This $\bm{w}$ induces the following triangulation (Case 2).
\begin{equation*}
    \begin{bmatrix}
        0 & 0 & \ast \\
        0 & \ast & 0\\
        \ast & \ast & \ast
    \end{bmatrix}, \;
    \begin{bmatrix}
        0 & \ast & \ast \\
        0 & \ast & 0 \\
        \ast & \ast & 0
    \end{bmatrix}, \;
    \begin{bmatrix}
        0 & 0 & \ast \\
        0 & \ast & \ast \\
        \ast & 0 & \ast
    \end{bmatrix}, \;
    \begin{bmatrix}
        0 & 0 & \ast \\
        \ast & \ast & \ast \\
        \ast & 0 & 0
    \end{bmatrix}, \;
    \begin{bmatrix}
        0 & \ast & \ast \\
        \ast & \ast & 0 \\
        \ast & 0 & 0
    \end{bmatrix}, \;
    \begin{bmatrix}
        \ast & \ast & \ast \\
        \ast & 0 & 0 \\
        \ast & 0 & 0
    \end{bmatrix}.
\end{equation*}
In this case, the solution space of this case is spanned by
\begin{equation*}
    \begin{split}
        \{&(F(x,s))_{|s=0},\; \partial_{s_{1}}(F(x,s))_{|s=0},\; \partial_{s_{2}}(F(x,s))_{|s=0},\;
        \partial_{s_{3}}(F(x,s))_{|s=0},\\ &\partial_{s_{4}}(F(x,s))_{|s=0},\; (\partial_{s_{2}}\partial_{s_{3}}+\partial_{s_{2}}\partial_{s_{4}}+\partial_{s_{3}}\partial_{s_{4}}-\frac{1}{2}\partial^{2}_{s_{4}})(F(x,s))_{|s=0}\}.
    \end{split}
\end{equation*}\\
\\
Case 3 \par
Let $\bm{w}=(1,55,2,34,3,5,8,13,21)$. Then
\begin{equation*}
    \mathrm{in}_{\bm{w}}I_{A} = \langle
    \partial_{2}\partial_{4},\:
    \partial_{2}\partial_{7},\:
    \partial_{4}\partial_{8},\:
    \partial_{3}\partial_{4},\:
    \partial_{1}\partial_{9},\:
    \partial_{4}\partial_{9},\:
    \partial_{2}\partial_{6},\:
    \partial_{2}\partial_{9},\:
    \partial_{5}\partial_{9} \rangle.
\end{equation*}
This $\bm{w}$ induces the following triangulation (Case 3).
\begin{equation*}
    \begin{bmatrix}
        \ast & \ast & \ast \\
        0 & \ast & 0\\
        0 & \ast & 0
    \end{bmatrix}, \;
    \begin{bmatrix}
        \ast & 0 & \ast \\
        0 & \ast & 0 \\
        \ast & \ast & 0
    \end{bmatrix}, \;
    \begin{bmatrix}
        0 & 0 & \ast \\
        0 & \ast & \ast \\
        \ast & \ast & 0
    \end{bmatrix}, \;
    \begin{bmatrix}
        \ast & 0 & \ast \\
        0 & \ast & \ast \\
        \ast & 0 & 0
    \end{bmatrix}, \;
    \begin{bmatrix}
        0 & 0 & \ast \\
        0 & 0 & \ast \\
        \ast & \ast & \ast
    \end{bmatrix}, \;
    \begin{bmatrix}
        \ast & 0 & 0 \\
        \ast & \ast & \ast \\
        \ast & 0 & 0
    \end{bmatrix}.
\end{equation*}
In this case, the solution space of this case is spanned by
\begin{equation*}
    \begin{split}
        \{&(F(x,s))_{|s=0},\; \partial_{s_{1}}(F(x,s))_{|s=0},\; \partial_{s_{2}}(F(x,s))_{|s=0},\\
        &\partial_{s_{3}}(F(x,s))_{|s=0},\; \partial_{s_{4}}(F(x,s))_{|s=0},\; \frac{1}{2}\partial^{2}_{s_{4}}(F(x,s))_{|s=0}\}.
    \end{split}
\end{equation*}\\
\\
Case 4 \par
Let $\bm{w}=(21,13,1,55,2,3,5,8,34)$. Then
\begin{equation*}
    \mathrm{in}_{\bm{w}}I_{A} = \langle
    \partial_{2}\partial_{4},\:
    \partial_{1}\partial_{8},\:
    \partial_{4}\partial_{8},\:
    \partial_{3}\partial_{4},\:
    \partial_{1}\partial_{9},\:
    \partial_{4}\partial_{9},\:
    \partial_{2}\partial_{6},\:
    \partial_{2}\partial_{9},\:
    \partial_{5}\partial_{9}
    \rangle.
\end{equation*}
This $\bm{w}$ induces the following triangulation (Case 4).
\begin{equation*}
    \begin{bmatrix}
        0 & \ast & \ast \\
        0 & \ast & 0\\
        \ast & \ast & 0
    \end{bmatrix}, \;
    \begin{bmatrix}
        0 & 0 & \ast \\
        0 & \ast & \ast \\
        \ast & \ast & 0
    \end{bmatrix}, \;
    \begin{bmatrix}
        0 & 0 & \ast \\
        0 & 0 & \ast \\
        \ast & \ast & \ast
    \end{bmatrix}, \;
    \begin{bmatrix}
        \ast & \ast & \ast \\
        0 & \ast & 0 \\
        \ast & 0 & 0
    \end{bmatrix}, \;
    \begin{bmatrix}
        \ast & 0 & \ast \\
        0 & \ast & \ast \\
        \ast & 0 & 0
    \end{bmatrix}, \;
    \begin{bmatrix}
        \ast & 0 & 0 \\
        \ast & \ast & \ast \\
        \ast & 0 & 0
    \end{bmatrix}.
\end{equation*}
In this case, the solution space of this case is spanned by
\begin{equation*}
    \begin{split}
        \{&(F(x,s))_{|s=0},\; \partial_{s_{1}}(F(x,s))_{|s=0},\; \partial_{s_{2}}(F(x,s))_{|s=0},\;
        \partial_{s_{3}}(F(x,s))_{|s=0},\\ &\partial_{s_{4}}(F(x,s))_{|s=0},\; (\partial_{s_{1}}\partial_{s_{3}}-\frac{1}{2}\partial^{2}_{s_{3}})(F(x,s))_{|s=0}\}.
    \end{split}
\end{equation*}\\
\\
Case 5\par
Let $\bm{w}=(2,1,0,4,0,5,0,2,5)$. Then
\begin{equation*}
    \mathrm{in}_{\bm{w}}I_{A} = \langle
    \partial_{2}\partial_{4},\:
    \partial_{1}\partial_{8},\:
    \partial_{4}\partial_{8},\:
    \partial_{1}\partial_{6},\:
    \partial_{1}\partial_{9},\:
    \partial_{4}\partial_{9},\:
    \partial_{2}\partial_{6},\:
    \partial_{2}\partial_{9},\:
    \partial_{6}\partial_{8}
    \rangle.
\end{equation*}
Then $\bm{w}$ induces the following triangulation (Case 5).
\begin{equation*}
    \begin{bmatrix}
        0 & 0 & \ast \\
        0 & \ast & 0\\
        \ast & \ast & \ast
    \end{bmatrix}, \;
    \begin{bmatrix}
        0 & \ast & \ast \\
        0 & \ast & 0 \\
        \ast & \ast & 0
    \end{bmatrix}, \;
    \begin{bmatrix}
        \ast & \ast & \ast \\
        0 & \ast & 0 \\
        \ast & 0 & 0
    \end{bmatrix}, \;
    \begin{bmatrix}
        0 & 0 & \ast \\
        0 & \ast & \ast \\
        \ast & 0 & \ast
    \end{bmatrix}, \;
    \begin{bmatrix}
        0 & 0 & \ast \\
        \ast & \ast & \ast \\
        \ast & 0 & 0
    \end{bmatrix}, \;
    \begin{bmatrix}
        \ast & 0 & \ast \\
        \ast & \ast & 0 \\
        \ast & 0 & 0
    \end{bmatrix}.
\end{equation*}
In this case, space of this case is spanned by
\begin{equation*}
    \begin{split}
        \{&(F(x,s))_{|s=0},\; \partial_{s_{1}}(F(x,s))_{|s=0},\; \partial_{s_{2}}(F(x,s))_{|s=0},\;
        \partial_{s_{3}}(F(x,s))_{|s=0},\\ &\partial_{s_{4}}(F(x,s))_{|s=0},\; (\frac{1}{2}\partial^{2}_{s_{1}}-\partial_{s_{2}}\partial_{s_{3}}+\frac{1}{2}\partial^{2}_{s_{4}})(F(x,s))_{|s=0}\}.
    \end{split}
\end{equation*}

\subsection{Application of the Main Theorem to K3 Surfaces}
So far, we have discussed the case $\bm{\beta}=\bm{0}$ for simplicity. However, our method also applies to the case $\bm{\beta}=
(1/2, 1/2, 1/2, 1/2, 1/2, 1/2)$. This $\bm{\beta}$ arises in the study a period map for a family of K3 surfaces \cite{Love}. Here, we investigate the applicability of the main theorem to this parameter. The following cases are discussed in page 11.\\
\\
Case 1\par
In this case, $\bm{\beta}\not\in\mathrm{Core}(\Delta_{\bm{w}})$. However, there exists fake exponents $v_{1}=(0,0,1/2,0,0,1/2,-1/2,1/2,1/2),\:v_{2}=(-1/2,1/2,1/2,1/2,0,0,1/2,0,0),$\\$v_{3}=(0,0,1/2,0,1/2,0,1/2,0,0)$ with multiplicity $4$. Since $\mathrm{nsupp}(v_{3})=\emptyset\subset C_{w}$, we obtain $I_{0}=K$ by Theorem \ref{mainth}.\\
\\
Case 5\par
In this case, $\bm{\beta}\in\mathrm{Core}(\Delta_{\bm{w}})$. Therefore, we have $I_{0}=K$ by Corollary \ref{NT}. Indeed, there exists a unique fake exponent $v=(0,0,1/2,0,0,1/2,0,0,1/2)$ with multiplicity $6$. Since $\mathrm{nsupp}(v)=\emptyset\subset C_{w}$, we obtain $I_{0}=K$ by Theorem \ref{mainth}.\\
\\
Theorem \ref{mainth} can also be applied to the other cases, just as in Case 1.

\subsection*{\normalsize Mao NAGAMINE}
\noindent
Department of Mathematics \\
Graduate School of Science\\
Hokkaido University\\
Sapporo 060-0810, JAPAN\\
e-mail: nagamine.mao.d9@elms.hokudai.ac.jp
\end{document}